\newtheorem{theorem}{Theorem}
\newtheorem{proposition}{Proposition}
\newtheorem{remark}{Remark}
\newtheorem{definition}{Definition}
\theoremstyle{remark}
\newtheorem{example}{\textbf{Example}}
\title{Writing $\pi$ as sum of arcotangents with linear recurrent sequences, Golden mean and Lucas numbers}
\author{Marco Abrate, Stefano Barbero, Umberto Cerruti, Nadir Murru}
\date{}
\begin{document}

\maketitle

\begin{abstract}
In this paper, we study the representation of $\pi$ as sum of arcotangents. In particular, we obtain new identities by using linear recurrent sequences. Moreover, we provide a method in order to express $\pi$ as sum of arcotangents involving the Golden mean, the Lucas numbers, and more in general any quadratic irrationality.
\end{abstract}

\section{Expressions of $\pi$ via arctangent function with linear recurrent sequences}

\indent \indent The problem of expressing $\pi$ as the sum of arctangents has been deeply studied during the years. The first expressions are due to Newton (1676), Machin (1706), Euler (1755), who expressed $\pi$ using the following identities
$$\cfrac{\pi}{2}=2\arctan \left(\cfrac{1}{2}\right)+\arctan \left(\cfrac{4}{7}\right)+\arctan \left(\cfrac{1}{8}\right)$$
$$\cfrac{\pi}{4}=\arctan \left(\cfrac{1}{2}\right)+\arctan\left(\cfrac{1}{3}\right)$$
$$\cfrac{\pi}{4}=5\arctan\left(\cfrac{1}{7}\right)+2\arctan \left(\cfrac{3}{79}\right),$$
respectively (see, e.g., \cite{Tw} and \cite{W}). Many other identities and methods to express and calculate $\pi$ involving the arctangent function have been developed. Some recent results are obtained in \cite{Lih} and \cite{Cal}.\\
\indent In this section, we find a method to generate new expressions of $\pi$ in terms of sum of arctangents, mainly using the properties of linear recurrent sequences. For the sake of simplicity, we will use the following notation:
$$A(x)=\arctan(x).$$
It is well--known that for $x,y \geq0$, if $\displaystyle{y\not=\cfrac{1}{x}}$
$$A(x)+A(y) =
\begin{cases} 
A(x\odot y) &\qquad \text{if} \qquad xy<1, \cr 
A(x\odot y) + \text{sign}(x)\pi&\qquad \text{if} \qquad xy>1,
\end{cases}
$$
where
$$x\odot y=\cfrac{x+y}{1-xy}.$$
Let us denote by $x^{\odot n}$ the $n$--th power of $x$ with respect to the product $\odot$.
\begin{remark}
The product $\odot$ is associative, commutative and 0 is the identity.
\end{remark}

\begin{definition}
We denote by $a=(a_n)_{n=0}^{+\infty}=\mathcal W(\alpha,\beta,p ,q)$ the linear recurrent sequence of order 2 with characteristic polynomial $t^2-pt+q$ and initial conditions $\alpha$ and $\beta$, i.e.,
$$\begin{cases} a_0=\alpha \cr a_1=\beta \cr a_n=pa_{n-1}-qa_{n-2}\quad \forall n\geq2\ .  \end{cases}$$
\end{definition}
\begin{theorem} \label{main}
Given $n\in\mathbb N$ and $x\in\mathbb R$, with $x\not=\pm1$, we have
$$\left( \cfrac{1}{x} \right)^{\odot n}=\cfrac{v_n(x)}{u_n(x)},\quad \forall n\geq1$$
where 
\begin{equation}\label{eq:u-v}
(u_n(x))_{n=0}^\infty=\mathcal W(1,x,2x,1+x^2),\quad (v_n(x))_{n=0}^\infty=\mathcal W(0,1,2x,1+x^2).
\end{equation}
\end{theorem}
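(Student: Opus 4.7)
The plan is to proceed by induction on $n$. The base case $n=1$ is immediate, since the initial conditions give $u_1(x)=x$ and $v_1(x)=1$, so that $v_1/u_1 = 1/x = (1/x)^{\odot 1}$. For the inductive step, I would assume $(1/x)^{\odot n} = v_n(x)/u_n(x)$ and exploit the associativity of $\odot$ noted in the preceding remark to write
\[
\left(\frac{1}{x}\right)^{\odot(n+1)} \;=\; \frac{v_n(x)}{u_n(x)}\odot\frac{1}{x} \;=\; \frac{x\,v_n(x)+u_n(x)}{x\,u_n(x)-v_n(x)}.
\]
The theorem therefore reduces to proving the two \emph{auxiliary identities}
\[
v_{n+1}(x)=u_n(x)+x\,v_n(x),\qquad u_{n+1}(x)=x\,u_n(x)-v_n(x),
\]
valid for every $n\geq 1$.

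The slickest way I see to establish these is to repackage the two sequences into the single complex identity $(x+i)^n = u_n(x) + i\,v_n(x)$, which is easily verified by induction: multiplying by $x+i$ gives $(x+i)^{n+1} = (x u_n - v_n) + i(u_n + x v_n)$, and comparing real and imaginary parts yields both auxiliary identities simultaneously. The recurrences in \eqref{eq:u-v} are then automatic, since $x+i$ is a root of $t^2 - 2xt + (1+x^2)$, so $(x+i)^n$ satisfies this linear recurrence and hence so do its real and imaginary parts. If one prefers to stay entirely within the real setting, the two identities can instead be proven by a short secondary induction using only the recurrences for $u_n$ and $v_n$ together with a routine algebraic rearrangement.

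The only mild obstacle is bookkeeping around the $\odot$-quotient in the inductive step, which is well-defined only when $x\,u_n(x)-v_n(x)\neq 0$, i.e.\ when $u_{n+1}(x)\neq 0$. Since the conclusion is an equality of rational functions in $x$, one may read it formally; the hypothesis $x\neq \pm 1$ is precisely what is needed to guarantee the non-degenerate case $u_2(x)=x^2-1\neq 0$, and the general identity then extends by the standard rigidity of rational-function identities.
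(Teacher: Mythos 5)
Your proof is correct and takes essentially the same route as the paper: the same induction on the $\odot$-power, with the key identities $u_{n+1}(x)=xu_n(x)-v_n(x)$ and $v_{n+1}(x)=u_n(x)+xv_n(x)$ extracted from powers of $x+i$, whereas the paper extracts the identical identities from powers of the matrix $\left(\begin{smallmatrix} x & 1 \\ -1 & x\end{smallmatrix}\right)$, which is just the $2\times 2$ real avatar of that complex number. Your closing caveat about the values of $x$ for which $u_{n+1}(x)$ vanishes (e.g.\ $u_3(\sqrt{3})=0$) is a genuine degeneracy that the paper's proof passes over in silence, so flagging it is a small improvement rather than a defect.
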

\begin{proof}
The matrix
$$M=\begin{pmatrix} x & 1 \cr -1 & x  \end{pmatrix}$$
has characteristic polynomial $t^2-2xt+x^2+1$. Consequently, it is immediate to see that
$$M^n=\begin{pmatrix} u_n(x) & v_n(x) \cr -v_n(x) & u_n(x)  \end{pmatrix}.$$
Using the matrix $M$ we can observe that
$$\begin{pmatrix} u_{n-1}(x) & v_{n-1}(x) \cr -v_{n-1}(x) & u_{n-1}(x) \end{pmatrix}\begin{pmatrix} x & 1 \cr -1 & x  \end{pmatrix}=\begin{pmatrix} u_n(x) & v_n(x) \cr -v_n(x) & u_n(x)  \end{pmatrix},$$
i.e., 
$$\begin{cases} u_n(x)=xu_{n-1}(x)-v_{n-1}(x) \cr v_n(x)=u_{n-1}(x)+xv_{n-1}(x) \end{cases}, \quad \forall n\geq1.$$
Now, we prove the theorem by induction. It is straightforward to check that
$$\cfrac{1}{x}=\cfrac{v_1(x)}{u_1(x)},\quad \left( \cfrac{1}{x} \right)^{\odot 2}=\cfrac{\frac{1}{x}+\frac{1}{x}}{1-\frac{1}{x^2}}=\cfrac{2x}{x^2-1}=\cfrac{v_2(x)}{u_2(x)}.$$
Moreover, let us suppose
$$\left( \cfrac{1}{x} \right)^{\odot (n-1)}=\cfrac{v_{n-1}(x)}{u_{n-1}(x)}$$
for a given integer $n\geq 1$, then
$$\left( \cfrac{1}{x} \right)^{\odot n}=\cfrac{1}{x}\odot\left( \cfrac{1}{x} \right)^{\odot (n-1)}=\cfrac{1}{x}\odot\cfrac{v_{n-1}(x)}{u_{n-1}(x)}=\cfrac{u_{n-1}(x)+xv_{n-1}(x)}{xu_{n-1}(x)-v_{n-1}(x)}=\cfrac{v_n(x)}{u_n(x)}.$$
\end{proof}

\begin{theorem} \label{th:x^n}
Given $n\in\mathbb N$ and $x\in\mathbb R$, with $x\not=\pm1$, we have
$$x^{\odot n}=(-1)^{n+1}\left(\cfrac{v_n(x)}{u_n(x)}\right)^{(-1)^n},\quad \forall n\geq1$$
where $u_n(x)$ and $v_n(x)$ are given by Eq.\eqref{eq:u-v}. 
\end{theorem}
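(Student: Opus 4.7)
The plan is to prove the formula by induction on $n$, after unpacking it according to the parity of $n$. When $n$ is odd, $(-1)^{n+1}=1$ and $(-1)^n=-1$, so the claim reads
$$x^{\odot n} = \frac{u_n(x)}{v_n(x)};$$
when $n$ is even, $(-1)^{n+1}=-1$ and $(-1)^n=1$, so the claim reads
$$x^{\odot n} = -\frac{v_n(x)}{u_n(x)}.$$
I would state these two cases explicitly before doing any computation, since almost all of the work is keeping the signs straight.

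For the base cases I would check $n=1$ (where $u_1=x$, $v_1=1$, giving $x^{\odot 1}=x=u_1/v_1$) and $n=2$ (where $u_2=x^2-1$, $v_2=2x$, and $x^{\odot 2}=\frac{2x}{1-x^2}=-v_2/u_2$) by direct substitution. These also guarantee the two parity branches start correctly.

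For the inductive step I would reuse the scalar recurrences extracted in the proof of Theorem~\ref{main}, namely
$$u_n(x)=xu_{n-1}(x)-v_{n-1}(x),\qquad v_n(x)=u_{n-1}(x)+xv_{n-1}(x),$$
together with the defining relation $x^{\odot n}=x\odot x^{\odot(n-1)}=(x+x^{\odot(n-1)})/(1-x\,x^{\odot(n-1)})$. If $n$ is odd then $n-1$ is even, so by hypothesis $x^{\odot(n-1)}=-v_{n-1}/u_{n-1}$; substituting and clearing denominators turns the numerator into $xu_{n-1}-v_{n-1}=u_n$ and the denominator into $u_{n-1}+xv_{n-1}=v_n$, giving $x^{\odot n}=u_n/v_n$. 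If $n$ is even then $n-1$ is odd and the hypothesis is $x^{\odot(n-1)}=u_{n-1}/v_{n-1}$; the same substitution produces $(xv_{n-1}+u_{n-1})/(v_{n-1}-xu_{n-1})=v_n/(-u_n)$, i.e.\ $x^{\odot n}=-v_n/u_n$.

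I expect no real obstacle beyond sign-bookkeeping: the only place the argument could slip is in recognising that the denominator $v_{n-1}-xu_{n-1}$ in the even case is $-u_n$ rather than $+u_n$, which is exactly what produces the alternating factor $(-1)^{n+1}$ in the statement. An alternative, more conceptual route would be to derive the identities $x^{\odot n}\cdot(1/x)^{\odot n}=1$ for $n$ odd and $x^{\odot n}+(1/x)^{\odot n}=0$ for $n$ even (which reflect $\cot(n\theta)=1/\tan(n\theta)$ and $\tan(n\pi/2-n\theta)=-\tan(n\theta)$) and then invoke Theorem~\ref{main}; but the straight induction above is shorter and self-contained.
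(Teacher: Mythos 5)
Your proof is correct and follows essentially the same route as the paper's: induction on $n$ using the recurrences $u_n(x)=xu_{n-1}(x)-v_{n-1}(x)$, $v_n(x)=u_{n-1}(x)+xv_{n-1}(x)$ extracted from the proof of Theorem~\ref{main}, with the claim unpacked by parity and the base cases $n=1,2$ checked directly. In fact your parity bookkeeping is cleaner than the paper's: the two displayed inductive cases in the paper carry interchanged labels (the computation shown under ``if $n$ is even'' is the one whose induction hypothesis and conclusion match $n$ odd, and vice versa), whereas your version assigns each computation to the correct parity and correctly identifies the denominator $v_{n-1}(x)-xu_{n-1}(x)=-u_n(x)$ as the source of the alternating sign.
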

\begin{proof}
By using the same arguments of Theorem \ref{main}, we can write
$$
x = \cfrac{u_1(x)}{v_1(x)}\qquad \text{and} \qquad x^{\odot 2}=\cfrac{2x}{1-\frac{1}{x^2}}=-\cfrac{v_2(x)}{u_2(x)}.
$$
Let us suppose by induction that $x^{\odot (n - 1)}=(-1)^{n}\left(\cfrac{v_{n-1}(x)}{u_{n-1}(x)}\right)^{(-1)^{n-1}}$, then if $n$ is even	
$$x^{\odot n}=\cfrac{x - \cfrac{v_{n-1}(x)}{u_{n-1}(x)}}{1+x\cfrac{v_{n-1}(x)}{u_{n-1}(x)}} = \cfrac{xu_{n-1}(x)-v_{n-1}(x)}{u_{n-1}(x)+xv_{n-1}(x)} = \cfrac{u_{n}(x)}{v_{n}(x)},$$
if $n$ is odd
$$x^{\odot n}=\cfrac{x + \cfrac{u_{n-1}(x)}{v_{n-1}(x)}}{1-x\cfrac{u_{n-1}(x)}{v_{n-1}(x)}} = \cfrac{xv_{n-1}(x)+u_{n-1}(x)}{v_{n-1}(x)-xu_{n-1}(x)} = -\cfrac{v_{n}(x)}{u_{n}(x)}.$$
\end{proof}

Let us highlight the matrix representation of the sequences $(u_n)_{n=0}^\infty$ and $(v_n)_{n=0}^\infty$ used in the previous theorem. Given the matrix 
$$M=\begin{pmatrix} x & 1 \cr -1 & x  \end{pmatrix}$$
we have
$$M^n=\begin{pmatrix} u_n(x) & v_n(x) \cr -v_n(x) & u_n(x)  \end{pmatrix}$$
$$ M^n\begin{pmatrix} v_m(x) \cr u_m(x) \end{pmatrix}=\begin{pmatrix} v_{n+m}(x) \cr u_{n+m}(x) \end{pmatrix}$$
\indent The sequences $(u_n)_{n=0}^\infty$ and $(v_n)_{n=0}^\infty$ are particular cases of the R\'edei polynomials $N_n(d,z)$ and $D_n(d,z)$,  introduced by R\'edei \cite{Redei} from the expansion of $(z+\sqrt{d})^n=N_n(d,z)+D_n(d,z)\sqrt{d}$. The rational functions $\cfrac{N_n(d,z)}{D_n(d,z)}$ have many interesting properties, e.g. , they are permutations of finite fields, as described in the book of Lidl  \cite{Lidl}. In \cite{bcm}, the authors showed that R\'edei polynomials are linear recurrent sequences of degree 2:
$$(N_n(d,z))_{n=0}^\infty=\mathcal W(1,z,2z,z^2-d),\quad (D_n(d,z))_{n=0}^\infty=\mathcal W(0,1,2z,z^2-d).$$
Thus, we can observe that
$$u_n(x)=N_n(-1,x), \quad v_n(x)=D_n(-1,x),\quad \forall n\geq0.$$
Moreover, a closed expression of R\'edei polynomials is well--known (see, e.g., \cite{bcm}). In this way, we can derive a closed expression for the sequences $(u_n)_{n=0}^\infty$ and $(v_n)_{n=0}^\infty$:
\begin{equation}\label{eq:redei}
\begin{cases} 
\displaystyle{u_n(x)=\sum_{k=0}^{[n/2]}\binom{n}{2k}(-1)^kx^{n-2k}} \cr 
\displaystyle{v_n(x)=\sum_{k=0}^{[n/2]}\binom{n}{2k+1}(-1)^kx^{n-2k-1}}  
\end{cases}.
\end{equation}

\indent Rational powers with respect to the product $\odot$ can also be considered by defining the $n$--th root as usual by
\begin{equation}\label{eq:root}
z = x^{\odot \cfrac{1}{n}} \qquad \text{iff} \qquad z^{\odot n} = x.
\end{equation}

Moreover, by means of Theorem \ref{th:x^n}, we have that Eqs. \eqref{eq:root} are equivalent to
$$
x=(-1)^{n+1}\left(\cfrac{v_n(z)}{u_n(z)}\right)^{(-1)^n},
$$
i.e., by Eqs. \eqref{eq:redei}, the $n$--th root of $x$ with respect to the product $\odot$ is a root of the polynomial
$$
P_n(z) = \sum_{k=0}^{n} \binom{n}{k} (-1)^{\left\lfloor\frac{k+1}{2}\right\rfloor}x^{\frac{1+(-1)^{k+1}}{2}}z^k.
$$

Let us consider the equation
\begin{equation} \label{aeq} nA\left(\cfrac{1}{x}\right)+A\left(\cfrac{1}{y}\right)=\cfrac{\pi}{4}, \end{equation}
we want to solve it when $n$ and $x$ are integer values. We point out that Eq. \eqref{aeq} is equivalent to
\begin{equation} \label{eq} \left(  \cfrac{1}{x} \right)^{\odot n}\odot \cfrac{1}{y}=1 \end{equation}

By Theorem \ref{main} we have
$$\left( \cfrac{1}{x} \right)^{\odot n}\odot \cfrac{1}{y}=\cfrac{v_n(x)}{u_n(x)}\odot\cfrac{1}{y}=\cfrac{u_n(x)+v_n(x)y}{-v_n(x)+u_n(x)y}.$$
Thus 
$$y=\cfrac{u_n(x)+v_n(x)}{u_n(x)-v_n(x)}$$
solves Eq. \eqref{eq}, i.e., 
$$\left( \cfrac{1}{x} \right)^{\odot n}\odot\cfrac{u_n(x)+v_n(x)}{u_n(x)-v_n(x)}=1,\quad \forall x\in\mathbb Z$$
and consequently we can solve Eq. \eqref{aeq}, i.e.,
\begin{equation} \label{maineq} nA\left(\cfrac{1}{x}\right)+A\left(\cfrac{u_n(x)-v_n(x)}{u_n(x)+v_n(x)}\right)=\cfrac{\pi}{4}+k(n,x)\pi,\quad \forall x\in\mathbb Z, \end{equation}
where $k$ is a certain integer number depending on $n$ and $x$. Precisely, we have
\begin{equation} \label{k} k(n,x)=\text{sign}\left(nA\left(\cfrac{1}{x}\right)-\cfrac{\pi}{4}\right)\left(\lfloor T\rfloor +\chi_{\left(\frac{1}{2},1\right)}\left(\{T\}\right)\right), \end{equation}
where $\chi_{\left(\frac{1}{2},1\right)}$ is the characteristic function of the set $\left(\frac{1}{2},1\right)$ and 
$$T=\cfrac{\left\lvert \cfrac{\pi}{4}-nA\left(\cfrac{1}{x}\right) \right\rvert}{\pi}.$$
In order to obtain Eq. \eqref{k}, we can rewrite Eq. \eqref{maineq} as
$$A\left(\cfrac{u_n(x)-v_n(x)}{u_n(x)+v_n(x)}\right)=\cfrac{\pi}{4}-nA\left(\cfrac{1}{x}\right)+k(n,x)\pi.$$
Let us consider the case in which the first member lies in the interval  $\displaystyle{\left(-\frac{\pi}{2},\frac{\pi}{2}\right)}$. If $\displaystyle{\cfrac{\pi}{4}-nA\left(\cfrac{1}{x}\right)\geq0}$, then $k(n,x)$ must be negative so that $\cfrac{\pi}{4}-nA\left(\cfrac{1}{x}\right)+k(n,x)\pi$ lies in the correct interval. Since
$$\cfrac{\pi}{4}-nA\left(\cfrac{1}{x}\right)=\pi\left(\lfloor T \rfloor + \{T\}\right),$$
it follows that if $\displaystyle{0\leq \{T\} \leq \frac{1}{2}}$, then $\displaystyle{0\leq \pi\cdot \{T\} \leq \frac{\pi}{2}}$ and consequently $k=-\lfloor T \rfloor$. Conversely, if $\displaystyle{\frac{1}{2} < \{T\} < 1}$, then $\displaystyle{\frac{\pi}{2} < \pi\cdot\{T\} < \pi}$ and, observing that
$$\cfrac{\pi}{4}-nA\left(\cfrac{1}{x}\right)=\pi\left(\lfloor T\rfloor+1\right)+ \pi\left(\{T\}-1\right),$$
we obtain $\displaystyle{-\frac{\pi}{2} < \pi(\{T\}-1)<0}$, that is $k(n,x)=-(\lfloor T \rfloor +1)$. 

\noindent Similar considerations apply to $\cfrac{\pi}{4}-nA\left(\cfrac{1}{x}\right)<0$, obtaining Eq. \eqref{k}.

\begin{proposition}
The sequences $(u_n(x)+v_n(x))_{n=0}^\infty$ and $(u_n(x)-v_n(x))_{n=0}^\infty$ are linear recurrent sequences of order 2 and precisely
$$(u_n(x)+v_n(x))_{n=0}^\infty=\mathcal W(1,x+1,2x,1+x^2),\quad (u_n(x)-v_n(x))_{n=0}^\infty=\mathcal W(1,x-1,2x,1+x^2)$$
\end{proposition}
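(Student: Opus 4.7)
The plan is to exploit the fact that both generating sequences $(u_n(x))$ and $(v_n(x))$ satisfy the same second-order linear recurrence, so any linear combination of them will satisfy that recurrence as well; then I only need to read off initial values.

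More concretely, first I would recall from Definition~1 and equation \eqref{eq:u-v} that $(u_n(x))_{n=0}^\infty=\mathcal W(1,x,2x,1+x^2)$ and $(v_n(x))_{n=0}^\infty=\mathcal W(0,1,2x,1+x^2)$, so both sequences obey
$$a_n = 2x\,a_{n-1} - (1+x^2)\,a_{n-2}, \qquad n\geq 2.$$
Next I would invoke the standard fact that the set of sequences satisfying a fixed homogeneous linear recurrence is a vector space: adding (resp.\ subtracting) the two recurrences term-by-term shows that $s_n := u_n(x)+v_n(x)$ and $d_n := u_n(x)-v_n(x)$ both satisfy
$$s_n = 2x\,s_{n-1} - (1+x^2)\,s_{n-2}, \qquad d_n = 2x\,d_{n-1} - (1+x^2)\,d_{n-2},$$
so both are linear recurrent of order $2$ with the same characteristic polynomial $t^2-2xt+(1+x^2)$.

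Finally I would compute the two initial values for each sequence from the initial data in \eqref{eq:u-v}:
$$s_0 = u_0(x)+v_0(x) = 1+0 = 1, \qquad s_1 = u_1(x)+v_1(x) = x+1,$$
$$d_0 = u_0(x)-v_0(x) = 1-0 = 1, \qquad d_1 = u_1(x)-v_1(x) = x-1,$$
from which the claimed descriptions $\mathcal W(1,x+1,2x,1+x^2)$ and $\mathcal W(1,x-1,2x,1+x^2)$ follow immediately. There is no real obstacle here; the only thing to be careful about is to justify the linearity step cleanly (summing or subtracting the two defining recurrences) rather than treating it as automatic, and to verify the initial conditions directly from the definition. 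The proof is essentially a two-line observation plus a short computation.
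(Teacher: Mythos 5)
Your proof is correct and is exactly the argument the paper has in mind: the paper's own proof just says the claim ``immediately follows from the definition,'' and your write-up supplies the two details that make this immediate --- linearity of the recurrence $a_n=2x\,a_{n-1}-(1+x^2)\,a_{n-2}$ under sums and differences, and the check of the initial values $u_0\pm v_0=1$, $u_1\pm v_1=x\pm 1$.
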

\begin{proof}
It immediately follows from the definition of the sequences $(u_n)_{n=0}^\infty$ and $(v_n)_{n=0}^\infty$.
\end{proof}
Eq. \eqref{maineq} provides infinitely many identities that express $\pi$ as sum of arctangents.
\begin{example}
Taking $n=7$ and $x=3$ in Eq. \eqref{maineq} we have
$$7A\left(\cfrac{1}{3}\right)+A\left(\cfrac{u_7(3)-v_7(3)}{u_7(3)+v_7(3)}\right)=\cfrac{\pi}{4},$$
i.e.,
$$7\arctan\left(\cfrac{1}{3}\right)-\arctan\left(\cfrac{278}{29}\right)=\cfrac{\pi}{4}.$$
For $n=8$ and $x=3$, we have
$$8\arctan\left(\cfrac{1}{3}\right)+\arctan\left(\cfrac{863}{191}\right)=\cfrac{\pi}{4}+\pi.$$
For $n=5$ and $x=2$, we have
$$5\arctan\left(\cfrac{1}{2}\right)-\arctan\left(\cfrac{79}{3}\right)=\cfrac{\pi}{4}.$$
For $n=2$ and $x=7$, we have
$$2\arctan\left(\cfrac{1}{7}\right)+\arctan\left(\cfrac{17}{31}\right)=\cfrac{\pi}{4}.$$
\end{example}

\section{Golden mean and $\pi$}
\indent \indent In Mathematics the most famous numbers are $\pi$ and the Golden mean. Thus, it is very interesting to find identities involving these special numbers. In particular, many expressions for $\pi$ in terms of the Golden mean have been found. For example, using the Machin formula of $\pi$ via arctangents,  the following equalities arise
$$\cfrac{\pi}{4}=\arctan\left( \cfrac{1}{\phi} \right)+\arctan \left(\cfrac{1}{\phi^3} \right)$$
$$\cfrac{\pi}{4}=2\arctan\left( \cfrac{1}{\phi^2} \right)+\arctan \left(\cfrac{1}{\phi^6} \right)$$
$$\cfrac{\pi}{4}=3\arctan\left( \cfrac{1}{\phi^3} \right)+\arctan \left(\cfrac{1}{\phi^5} \right)$$
$$\pi=12\arctan\left( \cfrac{1}{\phi^3} \right)+4\arctan \left(\cfrac{1}{\phi^5} \right),$$
see \cite{C1}, \cite{C2}, \cite{C3}. Moreover, in \cite{Luca}, the authors found all possible relations of the form
$$\cfrac{\pi}{4}=a\arctan(\phi^k)+b\arctan(\phi^l),$$
where $a,b$ are rational numbers and $k,l$ integers.\\
\indent In this section, we find new expressions of $\pi$ as sum of arctangents involving $\phi$. When $n=2$, from Eq. \eqref{eq} we find
\begin{equation}  \label{y} y=\cfrac{x^2+2x-1}{x^2-2x-1}. \end{equation}
It is well--known that the minimal polynomial of $\phi^m$ is
$$f_m(t)=t^2-L_mt+(-1)^m,$$
where $(L_m)_{m=0}^\infty=\mathcal W(2,1,1,-1)$ is the sequence of Lucas numbers (A000032 in OEIS \cite{oeis}). If we set $x=\phi^m$ in \eqref{y}, then it is equivalent to replace $x^2+2x-1$ and $x^2-2x-1$ with
$$x^2+2x-1 \pmod{f_m(x)},\quad x^2-2x-1 \pmod{f_m(x)},$$
respectively. When $m$ is odd, dividing by $x^2-L_mx-1$, we obtain
$$y=\cfrac{(L_m+2)x}{(L_m-2)x}=\cfrac{L_m+2}{L_m-2}$$
and when $m$ is even, we have
$$y=\cfrac{-2+(2+L_m)x}{-2+(-2+L_m)x},$$
and therefore
$$y=\cfrac{-2+(2+L_m)\phi^m}{-2+(-2+L_m)\phi^m}.$$
We find the following identities
\begin{equation} \label{phi1} \cfrac{\pi}{4}=2\arctan\left( \cfrac{1}{\phi^{2k+1}} \right)+\arctan\left( \cfrac{L_{2k+1}-2}{L_{2k+1}+2} \right) \end{equation}
$$\cfrac{\pi}{4}=2\arctan\left( \cfrac{1}{\phi^{2k}} \right)+\arctan\left( \cfrac{-2+(L_{2k}-2)\phi^{2k}}{-2+(L_{2k}+2)\phi^{2k}} \right).$$
The above procedure can be reproduced for any root $\alpha$  of a polynomial $x^2-hx+k$, finding expression of $\pi$ as the sum of arctangents involving quadratic irrationalities.
\begin{example}
Let us express $\pi$ in terms of $\sqrt{2}$. Its minimal polynomial is $x^2-2$ and 
$$x^2+2x-1 \pmod{x^2-2}=1+2x,\quad x^2-2x-1 \pmod{x^2-2}=1-2x.$$
We have
$$\cfrac{\pi}{4}=2\arctan\left(\cfrac{1}{\sqrt{2}}\right)+\arctan\left(\cfrac{1-2\sqrt{2}}{1+2\sqrt{2}}\right).$$
In general, if $k$ is odd the minimal polynomial of $\sqrt{2^k}$ is $x^2-2^k$ and
$$x^2+2x-1 \pmod{x^2-2^k}=2^k-1+2x,\quad x^2-2x-1 \pmod{x^2-2^k}=2^k-1-2x.$$
We have the following identity
$$\cfrac{\pi}{4}=2\arctan\left(\cfrac{1}{\sqrt{2^k}}\right)+\arctan\left(\cfrac{2^k-1-2^{\frac{k}{2}+1}}{2^k-1+2^{\frac{k}{2}+1}}\right).$$
\end{example} 
\begin{example}
Let us consider $\alpha=\cfrac{1}{2}(5+\sqrt{29})$. The minimal polynomial of $\alpha^3$ is $x^2-140x-1$ and
$$x^2+2x-1 \pmod{x^2-140x-1}=142x,\quad x^2-2x-1 \pmod{x^2-140x-1}=138x.$$
Thus, we have
$$\cfrac{\pi}{4}=2\arctan\left( \cfrac{8}{(5+\sqrt{29})^3} \right)+\arctan\left( \cfrac{69}{71} \right).$$
\end{example}
We can find different identities involving $\pi$ and the Golden mean considering the equation
\begin{equation}\label{eq:piphi}
x^{\odot\frac{1}{2}}\odot y = 1.
\end{equation}
\begin{proposition}
For any real number $x$, the following equalities hold
\begin{equation}\label{eq:pi/2}
2A(-x\pm\sqrt{1+x^2})+A(x)=\pm\cfrac{\pi}{2}.
\end{equation}
\end{proposition}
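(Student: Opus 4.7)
The key observation is that $z = -x \pm \sqrt{1+x^2}$ are precisely the two roots of
$$z^2 + 2xz - 1 = 0,$$
equivalently $1 - z^2 = 2xz$. Provided $x \neq 0$ and $z \neq \pm 1$, this rearranges to
$$\frac{2z}{1-z^2} = \frac{1}{x},$$
which by the tangent double--angle formula is exactly $\tan(2A(z)) = 1/x$. Consequently $2A(z) \equiv A(1/x) \pmod{\pi}$, and combining with the standard identity $A(1/x)+A(x)=\mathrm{sign}(x)\cdot\pi/2$ (valid for any $x \neq 0$) gives
$$2A(z) + A(x) \equiv \pm\frac{\pi}{2} \pmod{\pi}.$$

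To upgrade this congruence to the claimed equality, I would use a continuity argument. Set $F_\pm(x) := 2A(-x\pm\sqrt{1+x^2}) + A(x)$. Each $F_\pm$ is a continuous function of $x\in\mathbb{R}$, since arctangent is continuous on all of $\mathbb{R}$ and $-x\pm\sqrt{1+x^2}$ is smooth in $x$. By the previous paragraph, $F_\pm$ takes values only in the discrete set $\frac{\pi}{2}+\pi\mathbb{Z}$, and so must be constant on $\mathbb{R}$. Evaluating at $x=0$, where $z_\pm=\pm 1$, yields $F_\pm(0)=2A(\pm 1) + A(0) = \pm\pi/2$, which pins down the constant and finishes the proof.

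The only genuine obstacle is the $\pmod{\pi}$ ambiguity introduced by $\tan$. A direct pointwise proof via the $\odot$ addition rule stated at the opening of Section~1 is possible but mildly unpleasant: one must split into four subcases according to $\mathrm{sign}(x)$ and the choice of $\pm$, check in each whether $z^2\gtrless 1$ to compute the $\mathrm{sign}(z)\pi$ correction produced by $A(z)+A(z)=A(z\odot z)+\varepsilon\pi$ with $z \odot z = 1/x$, and verify that this correction always combines with $\mathrm{sign}(x)\pi/2$ to land exactly on $\pm\pi/2$. The continuity shortcut bypasses this bookkeeping entirely, so I would prefer it.
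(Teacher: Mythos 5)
Your proof is correct, and it resolves the only delicate point --- fixing the additive multiple of $\pi$ --- by a route genuinely different from the paper's. The paper works inside its $\odot$-calculus: it takes the two $\odot$-square roots $z_{1,2}=\cfrac{-1\pm\sqrt{1+x^2}}{x}$ of $x$ (the roots of $P_2(z)=xz^2+2z-x$ supplied by Theorem 2), solves $z_i\odot y=1$ to get $y=-x\pm\sqrt{1+x^2}$, and then determines the $\pm\pi$ correction terms of the arctangent addition law by a sign analysis ($y_2<0$ and $z_2y_2>1$ when $x>0$, with ``similar reasoning'' for $x<0$), which is left rather sketchy. You instead observe directly that $z=-x\pm\sqrt{1+x^2}$ satisfies $z^2+2xz-1=0$, hence $\tan(2A(z))=1/x$, giving the identity modulo $\pi$, and then eliminate the ambiguity by continuity of $F_\pm$ plus a single evaluation at $x=0$. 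The algebraic core is the same quadratic in both cases; what your version buys is that the continuity argument replaces the paper's case-by-case bookkeeping of correction terms with a one-line rigidity argument, and is arguably the more airtight of the two. One small point of hygiene: your congruence $2A(z)+A(x)\equiv\pm\cfrac{\pi}{2}\pmod{\pi}$ is only derived for $x\neq0$ (you need $x\neq 0$ both to divide by $x$ and to quote $A(1/x)+A(x)=\mathrm{sign}(x)\pi/2$), so strictly you should conclude that $F_\pm$ is constant on each of $(-\infty,0)$ and $(0,+\infty)$ and then use continuity at $0$ together with $F_\pm(0)=\pm\pi/2$ to identify both constants; saying $F_\pm$ ``takes values only in a discrete set'' on all of $\mathbb{R}$ slightly overstates what the first paragraph proved. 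This is cosmetic and does not affect correctness.
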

\begin{proof}
\noindent By Theorem \ref{th:x^n} we know that the roots of the polynomial $P_2(z) = xz^2+2z-x$ are the values of $x^{\odot\frac{1}{2}}$. Hence, from Eq. \eqref{eq:piphi} we obtain 
\begin{equation}\label{eq:zy}
z_{i}\odot y = 1, \quad i=1,2 ,
\end{equation}
\noindent where $$\displaystyle{z_1 = \cfrac{-1 + \sqrt{1+x^2}}{x}} \quad \text{and} \quad \displaystyle{z_2 = \cfrac{-1 - \sqrt{1+x^2}}{x}}.$$

\noindent Finally, solving Eq. \eqref{eq:piphi} with respect to $y$ we get 
$$y_1=-x+\sqrt{1+x^2} \qquad \text{or} \qquad y_2=-x-\sqrt{1+x^2}.$$

\noindent It should be noted that if $x$ is positive then $y_2<0$ and $z_2\cdot y_2 > 1$ so that 
$$
\cfrac{1}{2}A(x) + A(y_2) = A\left(x^{\odot \frac{1}{2}} + y_2\right) - \cfrac{\pi}{2},
$$
\noindent similar reasoning can be applied if $x$ is negative.

Now, substituting in Eqs. \eqref{eq:zy} we have
$$\cfrac{1}{2}A(x)+A(-x\pm\sqrt{1+x^2})=\pm\cfrac{\pi}{4},$$
or equivalently
$$
2A(-x\pm\sqrt{1+x^2})+A(x)=\pm\cfrac{\pi}{2}.
$$
\end{proof}

Eqs. \eqref{eq:pi/2} yield to other interesting formulas involving $\pi$, $\phi$ and Lucas numbers. To show this, we need some identities about Lucas numbers, Fibonacci numbers and the Golden mean:
$$\phi^m=\cfrac{L_m+F_m\sqrt{5}}{2},\quad L_m^2-5F_m^2=4(-1)^m,$$
see, e.g., \cite{Rab}.
Considering $m$ odd, if we set
$$x=\cfrac{L_m}{2}$$
it follows 
\begin{equation}\label{eq:y-}
-x-\sqrt{1+x^2}=\cfrac{-L_m-\sqrt{4+L_m^2}}{2}=\cfrac{-L_m-F_m\sqrt{5}}{2}=- \phi^m.
\end{equation}

Thus, substituting Eq. \eqref{eq:y-} into Eqs. \eqref{eq:pi/2} we find the formula 
\begin{equation}
-\cfrac{\pi}{2}=\arctan\left(\cfrac{L_{2k+1}}{2}\right)-2\arctan\left(\phi^{2k+1}\right).
\end{equation}

On the other hand, if we consider $y = -x + \sqrt{1 + x^2}$ we have
\begin{equation}\label{eq:y+}
-x+\sqrt{1+x^2}=\cfrac{-L_m+\sqrt{4+L_m^2}}{2}=\cfrac{-L_m+F_m\sqrt{5}}{2}.
\end{equation}
Moreover,
$$\phi^m\cdot \cfrac{-L_m+F_m\sqrt{5}}{2}=\cfrac{-L_m^2+5F_m^2}{4}=1,$$

\noindent and substituting in Eqs. \eqref{eq:pi/2} another interesting formula arises
\begin{equation}
\cfrac{\pi}{2}=\arctan\left(\cfrac{L_{2k+1}}{2}\right)+2\arctan\left(\cfrac{1}{\phi^{2k+1}}\right).
\end{equation}
\noindent Furthermore, by Eq. \eqref{phi1} we obtain an identity that only involves the Lucas numbers
\begin{equation} \label{onlyL} \cfrac{\pi}{4}=\arctan\left(\cfrac{L_{2k+1}}{2} \right)-\arctan\left(\cfrac{L_{2k+1}-2}{L_{2k+1}+2} \right). \end{equation}
The previous identity  corresponds to a special case of the following proposition.
\begin{proposition}
Let $f,g$ be real functions. If
$$g(x)=\cfrac{f(x)-1}{f(x)+1},$$
then
\begin{equation} \label{afg} A(f(x))-A(g(x))=\cfrac{\pi}{4}+k\pi, \end{equation}
for some integer $k$.
\end{proposition}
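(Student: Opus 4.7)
The plan is to recognize $g(x) = \frac{f(x)-1}{f(x)+1}$ as the classical tangent subtraction formula, namely $\tan(\theta - \pi/4)$ evaluated at $\theta = \arctan f(x)$, and then convert this into a statement about $\odot$ addition of arctangents that fits the framework set up in Section 1.

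First I would reduce the identity to a one--line $\odot$--computation. Noting that $A(-g(x)) = -A(g(x))$ by oddness of $\arctan$, the claimed equality \eqref{afg} is equivalent to
\[
A(f(x)) + A(-g(x)) = \frac{\pi}{4} + k\pi.
\]
By the addition formula recalled in the introduction, up to an integer multiple of $\pi$ the left--hand side equals $A\bigl(f(x) \odot (-g(x))\bigr)$, so it suffices to verify $f(x) \odot (-g(x)) = 1$, since $A(1) = \pi/4$.

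Next I would carry out this verification by direct substitution of $g(x) = \tfrac{f(x)-1}{f(x)+1}$:
\[
f \odot (-g) = \frac{f - g}{1 + fg} = \frac{\dfrac{f(f+1)-(f-1)}{f+1}}{\dfrac{(f+1)+f(f-1)}{f+1}} = \frac{f^2 + 1}{f^2 + 1} = 1,
\]
where $f$ stands for $f(x)$. Combined with the previous step this immediately gives \eqref{afg}.

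The only subtle point, which I would address with one sentence, is the $k\pi$ correction. The naive identity $A(a)+A(b) = A(a\odot b)$ holds only when $ab<1$; in general, as recalled at the start of Section~1, the two sides differ by an integer multiple of $\pi$ depending on the sign of the arguments and whether $f(x)(-g(x)) \gtrless 1$. Since the statement of the proposition only asserts the equality modulo $\pi$ (through the free integer $k$), no further case analysis is needed: the $\odot$--identity $f\odot(-g)=1$ together with $A(1)=\pi/4$ delivers the result for some integer $k$ depending on $x$. Hence the only real content is the algebraic simplification above, which is routine; the main conceptual step is recognizing $g$ as the $\odot$--inverse of the image of $f$ under reflection about $\arctan(1)$.
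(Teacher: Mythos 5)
Your proof is correct and follows essentially the same route as the paper: both reduce the claim to the arctangent addition formula and the algebraic identity $\dfrac{f-g}{1+fg}=1$, the only difference being that you substitute the given $g$ and verify the identity while the paper solves that equation for $g$. Your explicit remark that the spurious multiple of $\pi$ is absorbed by the free integer $k$ is a welcome touch of care that the paper's own proof leaves implicit.
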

\begin{proof}
We use the product $\odot$ for solving $A(f(x))-A(g(x))=\cfrac{\pi}{4}$. We have
$$A\left( \cfrac{f(x)-g(x)}{1+f(x)g(x)} \right)=\cfrac{\pi}{4}$$
and
$$\cfrac{f(x)-g(x)}{1+f(x)g(x)}=1$$
from which
$$g(x)=\cfrac{f(x)-1}{f(x)+1}.$$
\end{proof}
\begin{remark}
Eq. \eqref{afg} has been found by means of only elementary algebraic considerations. The same result could be derived from analysis. Observe that given the functions $f$ and $g$ satisfying the hypothesis of the previous proposition, then $(\arctan f(x))'=(\arctan g(x))'$.
\end{remark}

When $f(x)$ and $g(x)$ are specified in Eq. \eqref{afg}, the value of $k$ can be retrieved as in Eq. \eqref{k}  with analogous considerations.\\
\indent The previous proposition allows to determine new beautiful identities. For example, the function $f(x)=\cfrac{ax}{b}$ determines the function $g(x)=\cfrac{ax-b}{ax+b}$ and
$$A\left(\cfrac{ax}{b}\right)-A\left( \cfrac{ax-b}{ax+b} \right)=\cfrac{\pi}{4}+k\pi.$$
For $a=1$ and $b=2$, we obtain the following interesting formulas 
\begin{equation} \label{xx} \cfrac{\pi}{4}=\arctan\left( \cfrac{x}{2} \right)-\arctan\left(\cfrac{x-2}{x+2}\right),\end{equation}
which holds for any real number $x>-2$ and
\begin{equation} \label{xx2} -\cfrac{3\pi}{4}=\arctan\left( \cfrac{x}{2} \right)-\arctan\left(\cfrac{x-2}{x+2}\right),\end{equation}
valid for any real number $x<-2$.
Eqs. \eqref{xx} and \eqref{xx2} provide infinitely many interesting identities, like Eq. \eqref{onlyL} and, e.g.,  the following ones
$$\cfrac{\pi}{4}=\arctan\left( \cfrac{\phi}{2} \right)-\arctan\left(\cfrac{\phi-2}{\phi+2}\right)$$
$$\cfrac{\pi}{4}=\arctan\left( \cfrac{F_m}{2} \right)-\arctan\left(\cfrac{F_m-2}{F_m+2}\right)$$
$$\cfrac{\pi}{4}=\arctan\left( \cfrac{\sqrt{2}}{2} \right)-\arctan\left(\cfrac{\sqrt{2}-2}{\sqrt{2}+2}\right).$$

\end{document}